\documentclass{amsart}
\usepackage{amssymb,latexsym}
\usepackage{enumerate}
\usepackage{a4wide}
\usepackage[all]{xy}

\newtheorem{thm}{Theorem}[section]

\newtheorem{proposition}[thm]{Proposition}

\theoremstyle{definition}

\newtheorem{rem}[thm]{Remark}

\newcommand\m{\mathfrak{m}}
\newcommand\J{\mathcal{J}}
\newcommand\D{\mathcal{D}}
\newcommand\Div{\mathop{\rm Div}\nolimits}
\renewcommand\div{\mathop{\rm div}\nolimits}
\newcommand\Sel{\mathop{\rm Sel}\nolimits}
\newcommand\im{\mathop{\rm im}\nolimits}
\newcommand\Gal{\mathop{\rm Gal}\nolimits}
\newcommand\Norm{\mathop{\rm Norm}\nolimits}
\newcommand\Prin{\mathop{\rm Princ}\nolimits}
\newcommand\Pic{\mathop{\rm Pic}\nolimits}

\renewcommand\H{{\rm H}}
\newcommand\tr{{t}}

\newcommand\Br{\mathop{\rm Br}\nolimits}

\newcommand\Lsep{{L^{\rm s}}}
\newcommand\fake{{\rm fake}}
\newcommand\unfake{{\rm explicit}}

\newcommand\sep{^{\rm s}}
\newcommand\ksep{{k^{\rm s}}}
\newcommand\kvsep{{k_v^{\rm s}}}
\newcommand\Vsep{{V^{\rm s}}}
\newcommand\Csep{C^{\rm s}}

\renewcommand\P{\mathbb{P}}
\newcommand\F{\mathbb{F}}
\newcommand\A{\mathbb{A}}
\newcommand\Z{\mathbb{Z}}
\newcommand\ff{\kappa}
\newcommand\omm{1\, {\rm mod } \,\m}
\newcommand\isom{\cong}
\newcommand\M{M}
\newcommand\Tr{\mathop{\rm Tr}\nolimits}

\begin{document}

\title{Explicit Selmer groups for cyclic covers of~$\P^1$}

\author[M. Stoll]{Michael Stoll}
\address{Mathematisches Institut,
         Universit\"at Bayreuth,
         95440 Bayreuth, Germany.}
\email{Michael.Stoll@uni-bayreuth.de}
\author[R. van Luijk]{Ronald van Luijk}
\address{Mathematisch Instituut,
         Universiteit Leiden,
         Postbus 9512, 2300 RA, Leiden, The Netherlands.}
\email{rvl@math.leidenuniv.nl}

\date{}

\begin{abstract}
For any abelian variety~$J$ over a global field~$k$ and an isogeny $\phi \colon J \to J$, the Selmer group~$\Sel^\phi(J,k)$ is a subgroup of the Galois cohomology group $\H^1(\Gal(\ksep/k), J[\phi])$, defined in terms of local data.
When $J$ is the Jacobian of a cyclic cover of~$\P^1$ of prime degree~$p$, the Selmer group has a quotient by a subgroup of order at most~$p$ that is isomorphic to the `fake Selmer group', whose definition is more amenable to explicit computations.
In this paper we define in the same setting the `explicit Selmer group', which is isomorphic to the Selmer group itself and just as amenable to explicit computations as the fake Selmer group.
This is useful for describing the associated covering spaces explicitly and may
thus help in developing methods for second descents on the Jacobians considered.
\end{abstract}

\maketitle

\section{Introduction}

Let $k$ be a field and $\ksep$ a separable closure of $k$ with Galois group $G_k = \Gal(\ksep/k)$.  Let $C$ be a smooth projective curve over $k$ with Jacobian $J$. Let $\phi\colon J \to J$ be a separable isogeny and $J[\phi]$ the kernel of $\phi \colon J(\ksep) \to J(\ksep)$. Taking Galois invariants of the short exact sequence 
$$
0 \to J[\phi] \to J(\ksep) \xrightarrow{\phi} J(\ksep) \to 0
$$
gives rise to a long exact sequence, which induces another short exact sequence
$$
0 \to J(k)/\phi J(k) \xrightarrow{\delta_\phi} \H^1(G_k, J[\phi]) \to \H^1(G_k,J(\ksep))[\phi] \to 0,
$$
where $\H^1(G_k, J(\ksep))[\phi]$ stands for the kernel of the map $\phi_* \colon \H^1(G_k,J(\ksep)) \to \H^1(G_k, J(\ksep))$ induced by~$\phi$ on cohomology.
If $J(k)$ is finitely generated, which is the case if $k$ is finitely generated as a field over its prime subfield, and if $\phi$ is not an automorphism, then often, including in the cases we will treat, the size of the group $J(k)/\phi J(k)$ yields a bound on the rank of the Mordell-Weil group $J(k)$. As many methods of retrieving arithmetic information about $C$, such as the Mordell-Weil sieve and Chabauty's method, involve the rank of $J(k)$, it is of interest to be able to bound the size of $J(k)/\phi J(k)$, or, equivalently, of its image in $\H^1(G_k, J[\phi])$. Unfortunately, this group $\H^1(G_k, J[\phi])$ is in general very large and hard to handle. 

Now assume that $k$ is a global field. For each place $v$ of $k$, we write $k_v$ for the completion of $k$ at $v$. 
Then the local analogues of the map $J(k)/\phi J(k) \to \H^1(G_k, J[\phi])$ for each place $v$ can be put together to give the following commutative diagram.
$$
\xymatrix{J(k)/\phi J(k) \ar[r]^{\delta_\phi}\ar[d]
            & \H^1(G_k, J[\phi]) \ar[r] \ar[d] \ar[dr]^{\tau}
            & \H^1(G_k, J(\ksep))[\phi] \ar[d] \\
          \prod_v J(k_v)/\phi J(k_v) \ar[r]
            & \prod_v \H^1(\Gal(k_v\sep/k_v), J[\phi]) \ar[r]
            & \prod_v \H^1(G_k, J(\kvsep))[\phi] \\
}
$$
Here $\prod_v$ denotes the product over all places of $k$. 
By definition, the Selmer group $\Sel^\phi(J,k)$ is the kernel of~$\tau$: it consists of all elements of $\H^1(G_k, J[\phi]) $ that map into the image of the local
map $J(k_v)/\phi J(k_v) \to \H^1(\Gal(k_v\sep/k_v), J[\phi]) $ for every $v$. Clearly $\Sel^\phi (J,k)$ contains the image of $\delta_\phi$,
and it can be shown that $\Sel^\phi (J,k)$ is an effectively computable finite group, which  already gives a bound on $J(k)/\phi J(k)$. 
However, the description of $\Sel^\phi (J,k)$ as a subgroup of $\H^1(G_k, J[\phi]) $ is not amenable to explicit computations.

In \cite{PS:1997}, Poonen and Schaefer consider curves $C$ with an affine model given by $y^p = f(x)$, where $p$ is a prime number and $f$ is $p$-power free and splits into linear factors over $\ksep$.  They assume that the characteristic of $k$ is not equal to $p$ and that $k$ contains a primitive $p$-th root $\zeta$ of unity. They take the isogeny to be $\phi = 1-\zeta$, where
$\zeta$ acts on $C$ as $(x,y) \mapsto (x, \zeta y)$. From now on we restrict ourselves to this situation as well.
Note that this includes hyperelliptic curves as the special case
$p=2$; then the isogeny $\phi$ is multiplication by~$2$.
After an automorphism of the $x$-line, we may assume that the map to the $x$-line does not ramify at $\infty$, so that the degree of $f$ is divisible by $p$.\footnote{%
For this to be true, $k$ has to be sufficiently large. Later $k$ will be a global
field, and there will be no problem.}
Let $f_0$ be a radical of $f$, i.e., a separable polynomial in $k[x]$ with the same roots in $\ksep$ as $f$, and set $L = k[T]/f_0(T)$.
We assume that every point in~$J(k)$ can be represented by a $k$-rational divisor on~$C$.
Poonen and Schaefer define a homomorphism $(x-T) \colon J(k) \to L^*/{L^*}^pk^*$ and show that it factors as 
\begin{equation}\label{factorization}
J(k) \to J(k) /\phi J(k) \xrightarrow{\delta_\phi} \Sel^\phi(J,k) \to L^*/{L^*}^pk^*.
\end{equation}
We will recall the definition of this map in Section \ref{xminT}.
For $p=2$ and a polynomial $f$ of degree $4$ with a rational root, the curve $C$ is elliptic; the last map in the factorization is injective in this case and the map $(x-T)$ gives the usual $2$-descent map on $C$. For $p=2$ and $\deg f = 6$,
Cassels~\cite{Cassels:1983} had already defined the map $(x-T)$ (using different notation),
but it was Poonen and Schaefer that related it to the cohomological map $\delta_\phi$ through the given factorization.

In general, and in fact already in Cassels' case, the last map in the factorization need not be injective; its kernel is trivial or isomorphic to $\mu_p$.
Following~\cite{PS:1997}, the image of $\Sel^\phi(J,k)$ in
$L^*/{L^*}^pk^*$ is called the {\em fake Selmer group} $\Sel^\phi_\fake(J,k) $;
it is a quotient of the true Selmer group~$\Sel^\phi(J,k) $.
This means that, although the group~$L^*/{L^*}^pk^*$ is easier to work with
explicitly than~$\Sel^\phi(J,k) $, information may get lost by studying the image of $J(k)/\phi J(k)$ in the former group instead of the latter.

The aim of this paper is to replace the group $L^*/{L^*}^pk^*$ by one that is equally easy to work with and that admits an injection 
from $\Sel^\phi(J,k) $ into it, and thus also from $J(k) / \phi J(k)$. The description of such a group involves a `weighted norm map'~$N$ defined as follows. Let $f = c \prod_j f_j^{m_j}$ be the unique factorization of $f$ over $k$ with $f_j$ monic and $c \in k^*$. For $\beta \in L^*$ we then set
\[ N(\beta) = \prod_j \Norm_{L_j/k}(\beta_j)^{m_j}, \]
where $\beta_j$ is the image of $\beta$ in the field $L_j = k[x]/f_j(x)$.

It turns out that the image of the last map $\Sel^\phi(J,k) \to L^*/{L^*}^pk^*$ of the factorization (\ref{factorization}) is contained in the kernel of the map 
$N \colon L^*/{L^*}^pk^* \to k^*/{k^*}^p$ induced by the weighted norm map. The new group consists of all elements of this kernel, together with some choice of $p$-th root of their norm. More precisely, we will prove the following theorem. 

\begin{thm}\label{mainthm}
Let $k$ be a global field containing a primitive $p$-th root of unity, and let
$C$, $J$, $L$ and~$N$ be as in the discussion above. Assume that for each place $v$ of $k$, the curve $C$ has a $k_v$-rational divisor class of degree~$1$. 
Set $\Gamma = \{ (\delta,n) \in L^* \times k^* \, | \, N(\delta) = n^p\}$ and let $\chi \colon L^* \to \Gamma$ be given by 
$\theta \mapsto (\theta^p, N(\theta))$.  Let $\iota : k^* \to \Gamma$ be defined by $x \mapsto (x,x^{\frac{1}{p}\deg f})$.
Then there is a homomorphism  $(x-T,y) \colon J(k) \to \Gamma/\chi(L^*)\iota(k^*)$ that factors as 
$$
J(k) \to J(k) /\phi J(k) \xrightarrow{\delta_\phi} \Sel^\phi(J,k) \hookrightarrow \Gamma/\chi(L^*)\iota(k^*)
$$
and whose composition with the map $\Gamma/\chi(L^*)\iota(k^*) \to L^*/{L^*}^pk^*$ induced by the projection $\Gamma \to L^*$ equals the map $(x-T)$.  
\end{thm}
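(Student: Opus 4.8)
The plan is to build the map geometrically on divisors, to descend it to $J(k)$ and $J(k)/\phi J(k)$, and then to identify the target cohomologically so that injectivity on the Selmer group can be read off from the vanishing of a Brauer obstruction.

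First I would define $(x-T,y)$ on divisors. Given a degree-zero $k$-rational divisor $D=\sum_i n_i(x_i,y_i)$ whose support avoids the points at infinity and the Weierstrass points, set $(x-T,y)(D)=\bigl(\prod_i(x_i-T)^{n_i},\,\prod_i y_i^{n_i}\bigr)$; Galois-invariance of $D$ forces the first coordinate into $L^*$ and the second into $k^*$. To see that the pair lies in $\Gamma$, I would compute the weighted norm of the first coordinate: a Galois-equivariant calculation based on $\Norm_{L_j/k}(x-T)=f_j(x)$ and $f=c\prod_j f_j^{m_j}$ gives $N\bigl(\prod_i(x_i-T)^{n_i}\bigr)=\prod_i\bigl(f(x_i)/c\bigr)^{n_i}=c^{-\deg D}\bigl(\prod_i y_i^{n_i}\bigr)^p$, using $y_i^p=f(x_i)$; for $\deg D=0$ the factor $c^{-\deg D}$ disappears and $N(\delta)=n^p$, as required. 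The weighted norm and the exponent $\tfrac1p\deg f$ in $\iota$ are designed precisely so that $\iota(k^*)\subseteq\Gamma$ and so that this cancellation is clean.

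Next I would prove descent. Replacing $D$ by $D+\div(h)$ for $h\in k(C)^*$ changes $(x-T,y)(D)$ by $(x-T,y)(\div h)$, which I expect to lie in $\chi(L^*)\iota(k^*)$: Weil reciprocity rewrites the first coordinate of $(x-T,y)(\div h)$ as a product of values of $h$, exhibiting it in ${L^*}^p k^*$, while the matched second coordinate supplies exactly the $p$-th power and the constant that $\chi$ and $\iota$ absorb. This makes $(x-T,y)\colon J(k)\to\Gamma/\chi(L^*)\iota(k^*)$ a well-defined homomorphism, and composing with the projection $\Gamma\to L^*$ returns the Poonen--Schaefer map $(x-T)$ by construction. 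Factoring through $J(k)/\phi J(k)$ will follow once the map is matched with $\delta_\phi$ in the next step.

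The heart of the proof is the cohomological identification and the injectivity of $\Sel^\phi(J,k)\hookrightarrow\Gamma/\chi(L^*)\iota(k^*)$. I would use the description $J[\phi]\isom\ker(s)/\Delta(\mu_p)$, where $\mu_p^\Omega$ is the permutation module on the set $\Omega$ of roots of $f_0$, the map $s\colon\mu_p^\Omega\to\mu_p$ is the weighted product $(\epsilon_\omega)\mapsto\prod_\omega\epsilon_\omega^{m(\omega)}$, and $\Delta$ is the diagonal; note $s\circ\Delta$ is the $(\deg f)$-th power, hence trivial since $p\mid\deg f$. Shapiro's lemma gives $\H^1(G_k,\mu_p^\Omega)=L^*/{L^*}^p$ with $s_*=N$ and $\Delta_*$ the natural map from $k^*/{k^*}^p$. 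From $0\to\mu_p\xrightarrow{\Delta}\ker(s)\to J[\phi]\to0$ a class $\xi\in\H^1(G_k,J[\phi])$ has an obstruction $\partial\xi\in\H^2(G_k,\mu_p)=\Br(k)[p]$ to lifting to $\H^1(G_k,\ker s)$; a lift $\eta$ maps to some $\delta\in L^*/{L^*}^p$ with $N(\delta)\in{k^*}^p$, and a choice of $p$-th root $n$, together with the indeterminacy of $\eta$ (namely $\Delta_*(k^*/{k^*}^p)$) and of $n$ (namely $\mu_p$), reassembles precisely into $\chi(L^*)\iota(k^*)$. This produces the map $\Sel^\phi(J,k)\to\Gamma/\chi(L^*)\iota(k^*)$, and naturality of all the coboundaries shows it agrees with $(x-T,y)$ on the image of $\delta_\phi$, giving the asserted factorization.

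Injectivity is where the hypothesis on local degree-one divisor classes is used, and this is the step I expect to be the main obstacle. For $\xi\in\Sel^\phi(J,k)$ every local restriction $\xi_v$ lies in $\im(\delta_{\phi,v})$, and the existence of a $k_v$-rational divisor class of degree one lets me run the local version of the divisor construction above to produce a genuine local lift of $\xi_v$, so that $\partial_v\xi_v=0$ for all $v$. Since $\Br(k)\to\bigoplus_v\Br(k_v)$ is injective by class field theory, $\partial\xi=0$ globally, so $\xi$ lifts and lands in $\Gamma/\chi(L^*)\iota(k^*)$; and if its image there is trivial, then $\eta$ comes from $\H^1(G_k,\mu_p)$, whence $\xi=0$. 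The delicate points will be the bookkeeping at infinity and at the Weierstrass points when defining the $y$-coordinate, the verification that the local lifts really certify $\partial_v\xi_v=0$, and checking that the various ambiguities collapse to exactly $\chi(L^*)\iota(k^*)$ rather than to something larger.
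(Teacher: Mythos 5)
Your overall strategy matches the paper's: define $(x-T,y)$ on good divisors, use Weil reciprocity to descend modulo principal divisors, identify the target group cohomologically, and kill the Brauer obstruction using the local hypothesis together with the injectivity of $\Br(k)\to\prod_v\Br(k_v)$. The geometric computation of $N\bigl((x-T)(D)\bigr)$ and the local-global step are sound in outline. The gap is in the cohomological identification. You propose to send a class $\xi\in\H^1(J[\phi])$ with vanishing obstruction to a pair $(\delta,n)$ by lifting $\xi$ to $\eta\in\H^1(\ker s)$, projecting $\eta$ to $\delta\in L^*/{L^*}^p$, and then \emph{choosing} a $p$-th root $n$ of $N(\delta)$, claiming that the total indeterminacy is exactly $\chi(L^*)\iota(k^*)$. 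It is not: the free choice of $n$ alone contributes the subgroup generated by the class of $(1,\zeta)$, and this class is the generator of the kernel of $\Gamma/\chi(L^*)\iota(k^*)\to L^*/{L^*}^pk^*$. So $(1,\zeta)$ lies in $\chi(L^*)\iota(k^*)$ precisely in the degenerate cases where that kernel is trivial, i.e.\ where the explicit and fake Selmer groups already coincide (cf.\ \cite[Thm.~13.2]{PS:1997}). In every case where the theorem says something new, your recipe is only well defined modulo $\chi(L^*)\iota(k^*)\langle(1,\zeta)\rangle$, which amounts to recovering the fake Selmer group and discards exactly the extra information $n$ was supposed to carry.

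The missing idea is that $n$ must be produced \emph{together with} $\delta$ from a single trivialization, not chosen afterwards. This is what the paper does: from the sequence $1\to M\to\Lsep^*\xrightarrow{\chi}\Gamma\sep\to1$ and Hilbert~90, a cocycle representing $\eta$ is trivialized by some $\theta\in\Lsep^*$, and one sets $(\delta,n)=\chi(\theta)=(\theta^p,N(\theta))$; the ambiguity in $\theta$ is exactly multiplication by $L^*$, so the indeterminacy of $(\delta,n)$ is exactly $\chi(L^*)$, and $\iota(k^*)$ is added only when passing from $J_\m[\phi]\isom M$ to $J[\phi]\isom M/\mu_p$. This yields the isomorphism $\delta_\chi\colon\Gamma/\chi(L^*)\to\H^1(M)$ of Proposition~\ref{diagramone}, which is the engine of the whole argument. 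On the geometric side the same point recurs: $y(D)$ is a \emph{canonical} $p$-th root of $N\bigl((x-T)(D)\bigr)$, and the cocycle computation in the proof of Theorem~\ref{main} with $\theta=(x-T)(D')/h(W)$ is what certifies that this canonical root is the one singled out by $\delta_\chi$. Without some version of this, neither the compatibility of $(x-T,y)$ with $\delta_\phi$ nor the injectivity on $\Sel^\phi(J,k)$ can be completed.
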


The map $(x-T,y)$ will be defined in Section \ref{xminT}. The isomorphic image of $\Sel^\phi(J,k)$ in $ \Gamma/\chi(L^*)\iota(k^*)$ is the
{\em explicit Selmer group} $\Sel^\phi_\unfake(J,k)$.

If all one wants is to get the size of the Selmer group (and thus an upper
bound on the Mordell-Weil rank), then the results of~\cite{PS:1997} are sufficient,
since they tell us exactly the difference between the $\F_p$-dimensions of
$\Sel^\phi(J,k)$ and~$\Sel^\phi_\fake(J,k)$. On the other hand, apart from the
intellectual satisfaction resulting from a nice explicit description of the Selmer
group itself, the additional
information given by identifying $\Sel^\phi(J,k)$ with the explicit Selmer
group gives us a handle on the covering spaces corresponding to its elements:
in~\cite{FLT:2011} equations for the covering spaces are given in the genus two
case that depend on the image of the Selmer group element in the fake Selmer
group together with a square root of its norm, which is precisely the information
contained in the corresponding element of the explicit Selmer group. Explicit
models of these covering spaces are useful for the search of potentially large
Mordell-Weil generators and can also serve as a starting point for second descents.
In particular, one can hope that our explicit Selmer group can be used to
extend Cassels' method for computing the Cassels-Tate
pairing on the 2-Selmer group of an elliptic curve~\cite{Cassels:1998}, which
uses the quadratic Hilbert symbol on elements of the explicit version of the
Selmer group, to Jacobians of curves of genus~two.

Since our results here extend and improve what Poonen and Schaefer have done,
much of this paper is based on Poonen and Schaefer's paper~\cite{PS:1997}, including the weighted norm map~$N$. The main new element brought in is the group~$\Gamma$ of Theorem~\ref{mainthm}, which was first introduced in~\cite{FLT:2011}.
The recent preprint~\cite{BPS:2012} contains in its appendix a general recipe for
turning `fake' Selmer groups into `explicit' ones, which was developed as a
generalization of the method given in~\cite{SchSt:2004} for $p$-descent on
elliptic curves with $p$~odd and of the approach described here. 
Our result could in principle also be obtained as a special case of this general recipe.
However, the more direct approach used here leads to a much simpler proof.

In the next section we will introduce some notation, all following \cite{PS:1997}. In Section~\ref{cohomexp} we identify some cohomology groups with more explicit groups such as those mentioned in Theorem \ref{mainthm}. In Section \ref{xminT} we define the maps $(x-T)$ and $(x-T,y)$, so that in the last section we can `unfake' the fake Selmer group and replace it with the explicit Selmer group by proving Theorem~\ref{mainthm}.

%============================================================================

\section{Notation}

Our setting will be the same as in \cite{PS:1997}. Let $p$ be a prime.  Let $k$ be a field of characteristic not equal to  $p$ and let $\ksep$ be a separable closure of $k$ with Galois group $G_k = \Gal(\ksep/k)$.  
Assume that $k$ contains a primitive $p$-th root of unity.  
For any $G_k$-module $A$ and any integer $i \geq 0$ 
we abbreviate the cohomology group 
$\H^i(G_k,A)$ by $\H^i(A)$. Let $\pi \colon C \to \P^1$ be a cyclic cover of $\P^1$ over $k$ of degree $p$ such that all branch points are in $\P^1(\ksep)\setminus\{\infty\}$. By Kummer theory, the curve $C$ has a (possibly singular) model in $\A^2(x,y)$ given by $y^p=f(x)$, where $f \in k[x]$ factors over $\ksep$ as 
$$
f(x) = c \prod_{\omega\in \Omega} (x - \omega)^{a_\omega}
$$
with $c \in k^*$, with $1\leq a_\omega < p$ for all $\omega$ in the set $\Omega \subset \ksep$ of roots of $f$, and where $p$ divides the degree $\deg f = \sum_\omega a_\omega$ of $f$. Set $d = \# \Omega$. By the Riemann--Hurwitz formula the genus of $C$ equals $g(C) = (d-2)(p-1)/2$. 

For any $k$-variety $V$, we write $\Vsep = V\times_k \ksep$, while $\ff(V)$ and $\ff(\Vsep)$ denote the function fields of $V$ and $\Vsep$. Let $\Div \Csep$ be the group of all divisors on $\Csep$. If $f \in \ff(\Csep)^*$, we denote the divisor
of~$f$ by $\div(f) \in \Div \Csep$. We let
$\Prin \Csep = \{\div(f) : f \in \ff(\Csep)^*\}$ be the subgroup of principal divisors. Set $\Pic \Csep = \Div \Csep/ \Prin \Csep$. Also set 
\begin{align*}
\Div C &= \H^0(\Div \Csep)\\
\Prin C &= \H^0(\Prin \Csep), \\
\Pic C &= \Div C / \Prin C.
\end{align*}
As in \cite{PS:1997}, we consider the divisor $\m = \pi^* \infty \in \Div C$, the sum of all $p$ points above $\infty \in \P^1$.  For any function $h$ in the function field $\ff(\Csep)$ of $\Csep$ we say that $h$ is $\omm$ if $h(P)=1$ for all points $P$ in the support of $\m$ (for a more general definition, see \cite[section 2]{PS:1997}).
Let $\Div_\m \Csep \subset \Div \Csep$ be the group of all divisors with support disjoint from $\m$, and let $\Prin_\m \Csep \subset \Prin \Csep$ be the subgroup of all principal divisors of functions that are $\omm$.
Set $\Pic_\m \Csep = \Div_\m \Csep / \Prin_\m \Csep$ and
\begin{align*}
\Div_\m C &= \H^0(\Div_\m \Csep)\\
\Prin_\m C &= \H^0(\Prin_\m \Csep), \\
\Pic_\m C &= \Div_\m C / \Prin_\m C.
\end{align*}
Let $\Div^0 \Csep \subset \Div \Csep$ be the subgroup of divisors of degree $0$ and let $\Div^0 C$, $\Pic_\m^0 \Csep$, etc. be the degree-zero parts of the corresponding groups.
Let $\Div^{(p)} \Csep \subset \Div \Csep$ be the subgroup of divisors of degree divisible by $p$ and let $\Div^{(p)}C$, $\Pic_\m^{(p)} \Csep$, etc. be the degree-divisible-by-$p$ parts of the corresponding groups. Let $J$ and $J_\m$ denote the Jacobian of $C$ and the generalized Jacobian of the pair $(C,\m)$, respectively, so that $J(\ksep) = \Pic^0 \Csep$ and  $J_\m(\ksep) = \Pic_\m^0 \Csep$. We write $J[p]$ and $J_\m[p]$ for the kernel of multiplication-by-$p$, written as $[p]$, on $J(\ksep)$ and $J_\m(\ksep)$, respectively.
We denote the trivial group in diagrams by 1.

%============================================================================

\section{Making cohomology groups explicit}\label{cohomexp}

Pick any $c_0 \in k^*$ and define a radical $f_0=c_0\prod_{\omega \in \Omega} (x-\omega)\in k[x]$ of $f$.  Set $L = k[X]/f_0(X)$ and $\Lsep = L \otimes_k \ksep$. We will denote the image of $X$ in $L$ and $\Lsep$ by $T$.
By the Chinese Remainder Theorem, the $\ksep$-linear maps $\rho_\omega \colon \Lsep \to \ksep, T \mapsto \omega$ combine to an isomorphism 
$$
\rho = (\rho_\omega)_{\omega\in \Omega} \colon \Lsep \to \prod_{\omega\in \Omega} \ksep,
$$
which restricts to the diagonal embedding on $\ksep \subset \Lsep$. 
From now on, whenever $\omega$ is used as index, it ranges over all elements of $\Omega$.
Note that the induced Galois action on 
$\prod_\omega \ksep$ is given by acting on the indices as well, so by $\sigma\big( (a_\omega)_\omega\big) = \big(\sigma(a_{\sigma^{-1}\omega})\big)_\omega$. 
We often identify $\Lsep$ with $\prod_\omega \ksep$ through $\rho$, thereby identifying $T$ with the element $(\omega)_\omega$.
For any commutative ring $R$, we let $\mu_p(R)$ denote the kernel of the homomorphism $R^* \to R^*, x \mapsto x^p$. We abbreviate $\mu_p(k) = \mu_p(\ksep)$ by $\mu_p$ and note that $\rho$ induces an isomorphism $\mu_p(\Lsep) \to \prod_\omega \mu_p$.  Let the `weighted norm map'
$N\colon \Lsep \isom \prod_\omega \ksep \to \ksep$ be given by 
$(\beta_\omega)_\omega \mapsto \prod_\omega \beta_\omega^{a_\omega}$.
Since $p$ divides $\sum_\omega a_\omega$, the kernel of $N$ contains $\mu_p$. 
The map $N$ is Galois-equivariant, as for conjugate roots $\omega, \omega' \in \Omega$ we have $a_\omega = a_{\omega'}$, so it induces a map $N \colon L \to k$.  
This map is the same as the norm map $N$ that was defined in the introduction.
Let $\M$ denote the kernel of the induced 
map $N \colon \mu_p(\Lsep) \to \mu_p$.  Then we obtain the following commutative diagram, in which the horizontal and vertical sequences are exact. 
\begin{equation}\label{defM}
\xymatrix{
&1\ar[d]&1\ar[d]\cr
&\mu_p \ar[d] \ar@{=}[r]& \mu_p \ar[d] \cr
1\ar[r] & \M \ar[r]\ar[d] &\mu_p(\Lsep) \ar[d]\ar[r]^(0.58){N}&\mu_p\ar[r]
\ar@{=}[d]&1\cr
1 \ar[r] & \M/\mu_p \ar[r] \ar[d]&\mu_p(\Lsep)/\mu_p
\ar[r]^(0.63)N \ar[d]& \mu_p \ar[r] & 1\cr
&1&1 \cr
}
\end{equation}
Note that the map $N \colon \mu_p(\Lsep) \to \mu_p$ is surjective because 
we can take $1$ in each component of $\mu_p(\Lsep) \isom \prod_\omega \ksep$ except for one component, say corresponding to $\omega$, where we choose an $a_\omega$-th root of $\zeta$, which exists because the greatest common divisor $(a_\omega,p)$ equals $1$.

We will give a concrete description of the Galois cohomology groups $\H^1(M)$ and $\H^1(\mu_p(\Lsep))$ and their images in 
$\H^1(M/\mu_p)$ and $\H^1(\mu_p(\Lsep)/\mu_p)$. Let $\partial \colon L^* \times k^* \to k^*$ be the homomorphism that sends $(\delta,n)$ to $N(\delta) n^{-p}$ and let $\partial\sep$ denote the corresponding map from ${\Lsep}^* \times {\ksep}^*$ to ${\ksep}^*$. Set 
$$
\Gamma\sep = \ker \partial\sep
= \left\{(\delta,n) \in {\Lsep}^*\times {\ksep}^*\,\,|\,\, N(\delta) = n^p\right\}
$$
and 
$$
\Gamma = \H^0(\Gamma\sep) = \ker \partial
= \left\{(\delta,n) \in {L}^*\times {k}^*\,\,|\,\, N(\delta) = n^p\right\}.
$$
We will write $\iota$ for the injection $\ksep^* \to \Gamma\sep$
given by $x \mapsto (x,x^{\frac{1}{p}\deg f})$; it restricts to an injection
$\iota : k^* \to \Gamma$.
Let the map $\chi\colon \Lsep^* \to \Gamma\sep$ be given by $\theta \mapsto  (\theta^p,N(\theta))$. It is surjective, has kernel $M$, and it  restricts to 
a map $\chi\colon L^* \to \Gamma$.  The long exact sequence associated to the short exact sequence
\begin{equation}\label{alsoM}
1 \to M \to \Lsep^* \xrightarrow{\chi} \Gamma\sep \to 1
\end{equation}
contains the connecting map $\delta_\chi \colon \Gamma \to \H^1(M)$, which sends $(\delta,n)$ to the class of the cocycle $G_k \ni \sigma \mapsto \sigma(\theta)/\theta \in M$ for a fixed choice of $\theta \in \Lsep^*$ with $\chi(\theta) = (\delta,n)$.  Similarly, the short exact sequence
\begin{equation}\label{mupL}
1 \to \mu_p(\Lsep) \to \Lsep^* \xrightarrow{x \mapsto x^p} \Lsep^* \to 1
\end{equation}
provides a connecting map $\delta_p \colon L^* \to \H^1(\mu_p(\Lsep))$.
Parts of the following proposition were proved for $p=2$ in~\cite[Proposition 2.6]{FLT:2011}.

\begin{proposition}\label{diagramone}
The map $\delta_\chi$ induces an isomorphism $\delta_\chi \colon \Gamma/\chi(L^*) \to \H^1(M)$ and an isomorphism from $\Gamma/\chi(L^*)\iota(k^*)$ to the image of  $\H^1(M)$ in $\H^1(M/\mu_p)$.
The map $\delta_p$ induces an isomorphism $\delta_p \colon L^*/{L^*}^p \to \H^1(\mu_p(\Lsep))$ and an isomorphism from $L^*/{L^*}^pk^*$ to the image of  $\H^1(\mu_p(\Lsep))$ in $\H^1(\mu_p(\Lsep)/\mu_p)$. 
These maps fit in the commutative diagram 
$$
\xymatrix{
\mu_p \ar[rr]\ar@{=}[dd]\ar[rd]&& \H^1(M) \ar'[d][dd] \ar[rr] && \H^1(\mu_p(\Lsep)) \ar[dd]\\
&\Gamma/\chi(L^*) \ar[ru]^{\delta_\chi}\ar[rr] \ar[dd] && L^*/{L^*}^p \ar[dd] \ar[ru]_{\delta_p} \\
\mu_p\ar'[r][rr]\ar[rd]&& \H^1(M/\mu_p) \ar'[r][rr] && \H^1(\mu_p(\Lsep)/\mu_p) \\
&\Gamma/\chi(L^*)\iota(k^*) \ar[ru]^{\delta_\chi} \ar[rr] && L^*/{L^*}^pk^* \ar[ru]_{\delta_p}
}
$$
where the back face consists of part of the long exact sequences associated to the horizontal sequences in (\ref{defM}), the vertical maps in the front face are the obvious quotient-by-$k^*$ maps, the horizontal maps in the front face are induced by the projection map $\Gamma \to L^*, \, (\delta,n) \mapsto \delta$, and the remaining maps from $\mu_p$ send $\zeta \in \mu_p$ to the class of $(1,\zeta)$. 
\end{proposition}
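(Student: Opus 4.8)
The plan is to read everything off the long exact cohomology sequences of (\ref{alsoM}) and (\ref{mupL}) together with naturality of connecting homomorphisms, the only external input being the vanishing $\H^1(\Lsep^*) = 0$. Writing $\Lsep^* \cong \prod_\omega \ksep^*$ with $G_k$ permuting the factors, $\H^1(\Lsep^*)$ splits over the $G_k$-orbits in $\Omega$ into copies of $\H^1(\Gal(\ksep/k(\omega)), \ksep^*)$ by Shapiro's lemma, each vanishing by Hilbert's Theorem~90.

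I would first obtain the two unadorned isomorphisms. Taking cohomology of (\ref{alsoM}) gives the exact sequence $L^* \xrightarrow{\chi} \Gamma \xrightarrow{\delta_\chi} \H^1(M) \to \H^1(\Lsep^*) = 0$, so $\delta_\chi$ is onto with kernel $\chi(L^*)$, giving $\Gamma/\chi(L^*) \cong \H^1(M)$; the identical argument applied to (\ref{mupL}) gives $L^*/{L^*}^p \cong \H^1(\mu_p(\Lsep))$ via $\delta_p$.

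Next I would identify the images in $\H^1(M/\mu_p)$ and $\H^1(\mu_p(\Lsep)/\mu_p)$. By exactness of the long exact sequences of the vertical sequences $1 \to \mu_p \to M \to M/\mu_p \to 1$ and $1 \to \mu_p \to \mu_p(\Lsep) \to \mu_p(\Lsep)/\mu_p \to 1$ of (\ref{defM}), these images are $\H^1(M)$ and $\H^1(\mu_p(\Lsep))$ modulo the images of $\H^1(\mu_p)$. To pin those down I would compare the Kummer sequence $1 \to \mu_p \to \ksep^* \xrightarrow{x \mapsto x^p} \ksep^* \to 1$ with (\ref{alsoM}) via $(\mu_p \hookrightarrow M,\ \ksep^* \hookrightarrow \Lsep^* \text{ diagonally},\ \iota)$, and with (\ref{mupL}) via the three diagonal maps; the only point to check is that these are morphisms of short exact sequences, which for (\ref{alsoM}) amounts to the identity $\chi(\mathrm{diag}\,x) = \iota(x^p)$, immediate from $N(\mathrm{diag}\,x) = x^{\deg f}$. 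Since the Kummer connecting map $k^* \to \H^1(\mu_p)$ is surjective, naturality then shows that the image of $\H^1(\mu_p)$ in $\H^1(M)$ is $\delta_\chi(\iota(k^*))$ and that in $\H^1(\mu_p(\Lsep))$ is $\delta_p(k^*)$; dividing out gives the isomorphisms $\Gamma/\chi(L^*)\iota(k^*) \cong \im(\H^1(M) \to \H^1(M/\mu_p))$ and $L^*/{L^*}^pk^* \cong \im(\H^1(\mu_p(\Lsep)) \to \H^1(\mu_p(\Lsep)/\mu_p))$, and defines the bottom instances of $\delta_\chi$ and $\delta_p$.

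Finally I would verify the cube face by face, each face reducing to naturality of a connecting map. The top face is the connecting-map square of the morphism from (\ref{alsoM}) to (\ref{mupL}) given by the identity on the middle $\Lsep^*$, the inclusion $M \hookrightarrow \mu_p(\Lsep)$, and the projection $\Gamma\sep \to \Lsep^*$: both routes send $(\delta,n)$ to the class of $\sigma \mapsto \sigma(\theta)/\theta$ for a $p$-th root $\theta$ of $\delta$, the two choices differing by a coboundary in $\H^1(\mu_p(\Lsep))$. The back face is naturality applied to the morphism between the two horizontal rows of (\ref{defM}), and the front face is trivial since the projection $\Gamma \to L^*$ carries $\chi(L^*)\iota(k^*)$ onto ${L^*}^pk^*$. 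The left and right faces commute by the construction of the bottom $\delta$'s in the previous step, after which the bottom face follows by a chase using surjectivity of $\Gamma/\chi(L^*) \to \Gamma/\chi(L^*)\iota(k^*)$. The triangles involving $\mu_p$ commute because $\delta_\chi(1,\zeta)$ is the class of $\sigma \mapsto \sigma(\eta)/\eta$ for any $\eta \in \mu_p(\Lsep)$ with $N(\eta) = \zeta$, which is exactly the connecting-map image of $\zeta$ for the top row of (\ref{defM}). I expect the main obstacle to be the bookkeeping in the middle step---arranging the comparison morphisms so that the two images land precisely on $\iota(k^*)$ and $k^*$---rather than any single hard computation.
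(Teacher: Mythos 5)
Your proposal is correct and follows essentially the same route as the paper: both arguments rest on $\H^1(\Lsep^*)=\H^1(\ksep^*)=0$, read the plain isomorphisms off the long exact sequences of (\ref{alsoM}) and (\ref{mupL}), obtain the quotient isomorphisms by comparing with the Kummer sequence (\ref{mupk}) and passing to cokernels, and verify the cube via naturality of connecting maps plus the final computation of $\delta_\chi((1,\zeta))$ as $\sigma\mapsto\sigma(\eta)/\eta$ for $\eta\in\mu_p(\Lsep)$ with $N(\eta)=\zeta$. The only cosmetic differences are that you justify $\H^1(\Lsep^*)=0$ via Shapiro's lemma where the paper cites Serre, and you make explicit the identity $\chi(\mathrm{diag}\,x)=\iota(x^p)$ that the paper leaves implicit.
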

\begin{proof}
The commutativity of the front and back face are obvious. The projection map 
$\Gamma\sep \to \Lsep^*,\,\, (\delta,n) \mapsto \delta$ induces a map between the short exact sequences (\ref{alsoM}) and (\ref{mupL}). Part of the associated long exact sequences gives the following diagram.
$$
\xymatrix{
L^* \ar[r]^\chi \ar@{=}[d]& \Gamma \ar[d]\ar[r]^(0.45){\delta_\chi} & \H^1(M)\ar[d] \ar[r]  & \H^1(\Lsep^*)\ar@{=}[d] \\
L^* \ar[r]^{x \mapsto x^p} & L^* \ar[r]^(0.35){\delta_p} & \H^1(\mu_p(\Lsep)) \ar[r] & \H^1(\Lsep^*)
}
$$
By a generalization of Hilbert's Theorem~90 the group  
$\H^1(\Lsep^*)$ is trivial (see~\cite[Exercise~X.1.2]{locflds:1979}). The commutativity of the quadrilateral in the top face follows, as well as the fact that the maps $\delta_\chi$ and $\delta_p$ in it are isomorphisms.
Similarly, also using that $\H^1(\ksep^*)$ vanishes by Hilbert's Theorem~90, the natural maps from the short exact sequence
\begin{equation}\label{mupk}
1 \to \mu_p \to \ksep^* \xrightarrow{x \mapsto x^p} \ksep^* \to 1
\end{equation}
to (\ref{alsoM}) and (\ref{mupL}) yield long exact sequences that induce the following diagrams.
$$
\xymatrix{
k^*/{k^*}^p \ar[d]\ar[r]^\isom & \H^1(\mu_p) \ar[d] && k^*/{k^*}^p \ar[d]\ar[r]^\isom & \H^1(\mu_p) \ar[d]\\
\Gamma/\chi(L^*) \ar[r]^\isom_{\delta_\chi} & \H^1(M) && L^*/{L^*}^p \ar[r]^(0.45){\isom}_(0.45){\delta_p} & \H^1(\mu_p(\Lsep))
}
$$
The associated maps on cokernels of the vertical homomorphisms induce the claimed isomorphisms from $L^*/{L^*}^pk^*$ to the image of  $\H^1(\mu_p(\Lsep))$ in $\H^1(\mu_p(\Lsep)/\mu_p)$ and from $\Gamma/\chi(L^*)\iota(k^*)$ to the image of  $\H^1(M)$ in $\H^1(M/\mu_p)$. This also implies the commutativity of the left and right faces of the cube in the diagram. Commutativity of the quadrilateral in the bottom face follows immediately from the commutativity of the other faces of the cube and the fact that the quotient map $\Gamma/\chi(L^*) \to \Gamma/\chi(L^*)\iota(k^*)$ is surjective. Finally, choose a $\theta \in \mu_p(\Lsep)$ with $N(\theta)=\zeta$.  Then the image of $\zeta$ in $\H^1(M)$ is represented by the cocycle $\sigma \mapsto \sigma(\theta)/\theta$, which coincides with $\delta_\chi((1,\zeta))$. It follows that also the triangular prism in the diagram commutes. 
\end{proof}

%============================================================================

\section{A new map}\label{xminT}

Let $h$ be a nonzero rational function on~$C$. Then we can extend evaluation of~$h$
on points not in the support of~$\div(h)$ multiplicatively to divisors whose
support is disjoint from that of~$\div(h)$ by setting
\[ h(D) = \prod_P h(P)^{n_P} \qquad\text{if $D = \sum_P n_P P$.} \]
If $K$ is a field extension of~$k$ that is a field of definition of~$h$,
then this defines a group homomorphism from the group of $K$-defined divisors
with support disjoint from that of~$\div(h)$ into the multiplicative group
of~$K$.

In the following, we will frequently work with objects defined over~$L$.
There are (at least) two ways of interpreting what these objects mean.
We can either just think of them as $L$-defined objects (functions, points, etc.),
allowing \'etale algebras over~$k$ instead of only field extensions. Or else
we remind ourselves that the elements of~$L$ correspond to Galois-equivariant
maps from~$\Omega$ into~$\ksep$; then a function defined over~$L$ can be
considered as a Galois-equivariant map from~$\Omega$ into $\ff(\Csep)$, etc.
Sometimes, we use $\Lsep$ in place of~$L$; then the corresponding maps
from~$\Omega$ need not be Galois-equivariant. In this sense, $\mu_p(\Lsep)$
denotes the set of maps $\Omega \to \mu_p$, and $M$ denotes the subset
of maps~$\eta$ such that $N(\eta) = \prod_\omega \eta(\omega)^{a_\omega} = 1$.

For example, we let $W = (T, 0) \in C(L)$ be a `generic ramification point'
on~$C$. In the second interpretation, $W$ corresponds to the map
$\omega \mapsto (\omega, 0)$ that gives all the ramification points on~$C$
indexed by the roots of~$f$. In this section, we will consider the
function $x-T$, which is an $L$-defined rational function on~$C$. In our
second interpretation, we associate to each $\omega \in \Omega$ the
rational function $x-\omega \in \ff(\Csep)$. We have
\[ \div(x - T) = p W - \m \qquad\text{and}\qquad
   \div(y) = \Tr W - \tfrac{1}{p}(\deg f)\,\m \,,
\]
where $\Tr W = \sum_\omega a_\omega (\omega,0)$ denotes the `trace' of~$W$,
the additive analogue of the weighted norm~$N$. In other words, in our first
interpretation $W$ is a prime divisor in $\Div C_L$, while in the second 
interpretation it corresponds with a Galois-equivariant map 
$\Omega \to \Div \Csep$ sending $\omega$ to the prime divisor $(\omega,0)$, 
the images of which have weighted sum $\Tr W \in \Div C$. 

A divisor  on $C\sep$ is called {\em good} if its support is disjoint from
$\m = \pi^* \infty$ and the ramification points of $\pi$, i.e., disjoint 
from the support of $\div(y)$. This also means that the
support is disjoint from the support of~$\div(x-T)$.
Let $\Div_\perp \Csep$ denote the group of good divisors on $\Csep$, and set 
$\Div_\perp C = \H^0(\Div_\perp \Csep)$.  Every divisor class in $\Pic \Csep$ and $\Pic_\m \Csep$ 
is represented by a good divisor. Let $\Div_\perp^0 \Csep$, $\Div_\perp^0 C$, 
$\Div_\perp^{(p)} \Csep$, and $\Div_\perp^{(p)} C$ denote the obvious groups. 
By the introductory remarks of this section, the function $x-T$ defines
homomorphisms
\[ (x-T) \colon \Div_\perp C \to L^* \qquad\text{and}\qquad
   (x-T) \colon \Div_\perp \Csep \to \Lsep^* \,.
\]
We define the map 
$$
\alpha \colon J_\m[p] \to {\Lsep}^*, \qquad \D \mapsto \frac{(x-T)(D)}{h(W)},
$$
where $D$ is a good divisor representing the class $\D$, and where $h \in \ff(\Csep)$ is the unique function that is $\omm$ and satisfies $\div(h) = pD$. As before, $h(W)$
can be interpreted as the map $\omega \mapsto h((\omega,0)) \in \ksep^*$.
Note that $\alpha$ is well-defined as for any representative $D'$ of $\D$ there is a function $g \in \ff(\Csep)$ that is $\omm$ with $\div(g)=D'-D$, so that
$\div(g^ph) = pD'$; by Weil reciprocity we have 
\begin{align*}
  \frac{(x-T)(D')}{(g^ph)(W)}
    &= \frac{(x-T)(\div(g)+D)}{g^p(W)h(W)}
     = \frac{g(\div(x-T))}{g(pW)}\cdot\frac{(x-T)(D)}{h(W)} \\
    &= \frac{g(pW - \m)}{g(pW)} \cdot \frac{(x-T)(D)}{h(W)}
     = \frac{g(pW) g(\m)^{-1}}{g(pW)} \cdot \frac{(x-T)(D)}{h(W)}
     = \frac{(x-T)(D)}{h(W)},
\end{align*}
since $g(\m) = 1$.
We will see that $\alpha$ induces an isomorphism between $M$ and the kernel of an endomorphism of $J_\m$ that we now define.

The group $\mu_p$ acts on $C$ and $\Csep$ by letting $\zeta \in \mu_p$ act as $(x,y) \mapsto (x,\zeta y)$.
Linear extension gives a Galois-equivariant action on $\Div \Csep$ by the group ring $\Z[\mu_p]$. The element $\tr = \sum_{\zeta \in \mu_p} \zeta \in \Z[\mu_p]$
sends a point $Q \in \Csep(\ksep)$ to the divisor $\tr(Q) = \pi^*(\pi Q)$, which is linearly equivalent to $\m$. We conclude that $\tr$ sends a divisor $D \in \Div \Csep$ to a divisor linearly equivalent to $(\deg D) \m$, and the subgroups $\Div^0 \Csep$ and $\Div_\m^0 \Csep$ to $\Prin \Csep$ and $\Prin_\m \Csep$, respectively. This implies that the induced action of $\Z[\mu_p]$ on $J$, on $J_\m$, on 
$\Pic^0 C$, and on $\Pic_\m^0 C$ factors through the quotient $\Z[\mu_p]/\tr$, which is isomorphic to the cyclotomic subring of $k$ generated by $\mu_p$. 

Fix, once and for all, a primitive $p$-th root of unity $\zeta \in \mu_p$, so that this cyclotomic ring is equal to $\Z[\zeta]$. Set
$$
\phi = 1 - \zeta \, \qquad \text{and} \qquad \psi = -\sum_{i=1}^{p-1} i \zeta^i
$$
and notice that $\phi\psi = p$. 
Note that this is slightly different from \cite{PS:1997}, where $\phi$ and $\psi$ are defined as elements of the group ring $\Z[\mu_p]$. 
Let $J_\m[\phi]$ and $J[\phi]$ denote the kernels of the action of $\phi$ on 
$J_\m(\ksep)$ and $J(\ksep)$ respectively.

\begin{proposition}\label{epsilon}
There is an isomorphism $\epsilon \colon J_\m[\phi] \to M$ such that the homomorphism $\alpha$ is
the composition of $\psi \colon J_\m[p] \to J_\m[\phi]$ and $\epsilon$.
Furthermore, $\epsilon$ induces an isomorphism $J[\phi] \to M/\mu_p$.
\end{proposition}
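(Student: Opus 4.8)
The plan is to prove the proposition in four moves: (i) $\alpha$ takes values in $M$; (ii) $\alpha$ is invariant under the automorphism $\zeta$, hence factors through $J_\m[p]/\phi J_\m[p]$; (iii) $\psi$ identifies $J_\m[p]/\phi J_\m[p]$ with $J_\m[\phi]$, so that the requirement $\alpha=\epsilon\circ\psi$ defines $\epsilon$ and reduces the first claim to a bijectivity statement; and (iv) the quotient statement follows by comparing the torus extension defining $J_\m$ with the sequence $1\to\mu_p\to M\to M/\mu_p$.

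For (i), fix a good divisor $D$ of degree $0$ representing $\D\in J_\m[p]$ and the unique function $h$ that is $\omm$ with $\div(h)=pD$. Since $\div(x-\omega)=p(\omega,0)-\m$, Weil reciprocity applied to $x-\omega$ and $h$ gives $\big((x-\omega)(D)\big)^p=h(\omega,0)^p$, so every component of $\alpha(\D)$ is a $p$-th root of unity and $\alpha(\D)\in\mu_p(\Lsep)$. Weil reciprocity applied to $y$ and $h$, together with $\div(y)=\Tr W-\tfrac1p(\deg f)\,\m$ and $h(\m)=1$, gives $h(\Tr W)=y(D)^p$; since $\prod_\omega\big((x-\omega)(D)\big)^{a_\omega}=(f/c)(D)=y(D)^p$ as $\deg D=0$, the weighted norm $N(\alpha(\D))=y(D)^p/h(\Tr W)$ equals $1$, so $\alpha(\D)\in M$. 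These two reciprocity computations are the technical heart of the argument.

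For (ii) and (iii), I use that $x$ is $\zeta$-invariant and each ramification point $(\omega,0)$ is fixed by $\zeta$. Replacing $D$ by $\zeta_*D$ and $h$ by $h\circ\zeta^{-1}$ (which is $\omm$, with divisor $p\zeta_*D$) changes neither $(x-\omega)(D)$ nor $h(\omega,0)$, so $\alpha\circ\zeta=\alpha$ and $\alpha(\phi\D)=\alpha(\D)/\alpha(\zeta\D)=1$. Thus $\alpha$ kills $\phi J_\m[p]$ and descends to $\bar\alpha\colon J_\m[p]/\phi J_\m[p]\to M$; and since $\psi\phi=p$ annihilates the $p$-torsion, $\psi$ likewise descends to $\bar\psi\colon J_\m[p]/\phi J_\m[p]\to J_\m[\phi]$. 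Using that $J_\m[p]$ is free of rank $d-1$ over $\Z[\zeta]/p\cong\F_p[\phi]/(\phi^{p-1})$, in which $\psi$ is a unit multiple of $\phi^{p-2}$ (the module structure being that of \cite{PS:1997}), the map $\bar\psi$ is an isomorphism, and I set $\epsilon=\bar\alpha\circ\bar\psi^{-1}$, so that $\alpha=\epsilon\circ\psi$ by construction. It remains to prove $\bar\alpha$, equivalently $\epsilon$, bijective, and this is where I expect the real work. As $N$ is surjective on $\mu_p(\Lsep)\cong(\Z/p)^d$ we have $\#M=p^{d-1}=\#J_\m[\phi]=\#\big(J_\m[p]/\phi J_\m[p]\big)$, so it suffices to show $\bar\alpha$ is onto. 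For this I would compute $\alpha$ on the classes $[(\omega,0)-(\omega',0)]$ of differences of ramification points, which generate $J_\m[\phi]$ subject to the single relation expressing that $\Tr W$ is linearly equivalent to $\tfrac1p(\deg f)\,\m$ (namely $\div(y)$)—a relation mirroring the defining condition $N=1$ of $M$—and verify that their images generate $M$. This spanning step, whose nondegeneracy rests once more on Weil reciprocity, is the main obstacle; everything else is either a reciprocity computation or formal.

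For (iv), I would apply the snake lemma to multiplication by $\phi$ on the extension $1\to T_0\to J_\m\to J\to1$, where $T_0$ is the torus attached to the $p$ points above $\infty$, permuted cyclically by $\zeta$. A direct computation gives $T_0[\phi]\cong\mu_p$, and since $\#J_\m[\phi]=p^{d-1}=p\cdot\#J[\phi]$ the map $J_\m[\phi]\to J[\phi]$ is surjective, so $1\to\mu_p\to J_\m[\phi]\to J[\phi]\to1$ is exact; this matches $1\to\mu_p\to M\to M/\mu_p\to1$ from diagram~(\ref{defM}). I would conclude by checking that $\epsilon$ sends the copy of $\mu_p$ coming from $T_0[\phi]$ onto the diagonal $\mu_p\subset M$, so that $\epsilon$ descends to the claimed isomorphism $J[\phi]\to M/\mu_p$.
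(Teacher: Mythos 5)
Your steps (i) and (ii) are correct and cleanly executed: the two Weil-reciprocity computations showing $\alpha(\D)\in\mu_p(\Lsep)$ and $N(\alpha(\D))=1$, and the $\zeta$-invariance argument showing $\alpha$ kills $\phi J_\m[p]$, are exactly right (and are more detailed than anything in the paper, which at this point simply cites \cite{PS:1997}). Your route is genuinely different from the paper's: the paper does not construct $\epsilon$ from $\alpha$ at all, but instead imports the isomorphism $\epsilon\colon J_\m[\phi]\to M$ ready-made from \cite[Section~6 and Prop.~7.1]{PS:1997}, where it arises from the generalized Weil pairing $e_p$ on $\J_\m[p]$ and its nondegeneracy; the only thing the paper actually verifies is the identity $e_p(\D,W)=\alpha(\D)$, which yields $\alpha=\epsilon\circ\psi$. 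Your plan, by contrast, tries to make $\alpha$ itself do all the work.

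The problem is that the central content of the proposition --- that $\epsilon$ is an \emph{isomorphism} --- is exactly the part you do not prove. Defining $\epsilon=\bar\alpha\circ\bar\psi^{-1}$ reduces everything to the bijectivity of $\bar\alpha\colon J_\m[p]/\phi J_\m[p]\to M$, and after the counting argument ($\#M=p^{d-1}=\#J_\m[\phi]$) this becomes surjectivity; but your proposed verification --- evaluating $\alpha$ on classes of differences of ramification points and checking their images generate $M$ --- is only announced, not carried out, and you yourself flag it as ``the main obstacle.'' Note also that the divisors $(\omega,0)-(\omega',0)$ are not good (their support meets the ramification locus), so one must first move them within their $\Prin_\m$-class before $\alpha$ can be applied, and one must separately justify that these classes generate $J_\m[\phi]$. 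The same incompleteness affects step (iv): the exactness of $1\to\mu_p\to J_\m[\phi]\to J[\phi]\to 1$ is fine, but the claim that $\epsilon$ carries $T_0[\phi]$ onto the diagonal $\mu_p\subset M$ is again only promised. As written, the proposal establishes that a well-defined homomorphism $\epsilon$ with $\alpha=\epsilon\circ\psi$ exists, but not that it is an isomorphism, so it does not yet prove the proposition; the missing nondegeneracy is precisely what the paper outsources to the pairing $e_p$ of \cite{PS:1997}.
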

\begin{proof}
This is extracted from~\cite{PS:1997}.
Let $\J_\m[p]$ denote the $p$-torsion of the group $\Pic_\m \Csep / \langle \m' \rangle$, where $\m'$ denotes the class of $\pi^* P$ for any $P \in \A^1(k) \subset \P^1(k)$. By \cite[Section~7]{PS:1997} there is a pairing
$$
e_p \colon \J_\m[p] \times \J_\m[p] \to \mu_p,
$$
defined for a pair $(\D_1,\D_2)$ of classes, represented respectively by divisors $D_1$ and $D_2$ with disjoint support, to be 
$$
e_p(\D_1,\D_2) = (-1)^{d_1d_2} \frac{h_2(D_1)}{h_1(D_2)},
$$
where for $i=1,2$ we have $d_i = \deg D_i$, while $h_i \in \ff(\Csep)$ is the unique function such that 
$x^{-d_i}h_i$ is $\omm$ and $\div(h_i) = pD_i - d_i \m$. 
Note that the group $J_\m[p] \isom \Pic_\m^0(\Csep)[p]$ is a subgroup of $\J_\m[p]$. 
By \cite[Section~6 and Prop.~7.1]{PS:1997} there is an isomorphism $\epsilon \colon J_\m[\phi] \to M$ such that
$\epsilon (\psi \D) = e_p(\D,W)$ for all $\D \in J_\m[p]$. 
Let the class $\D \in J_\m[p]$ be represented by a good divisor $D$, automatically of degree $d_1=0$, and let $h \equiv \omm$ be a function satisfying $\div(h)  = pD$. Note that 
$x^{-1}(x-T)$ is $\omm$, so that we can take $x-T$ as the function corresponding
to~$W$ in the definition of~$e_p$. Therefore, we have 
$$
\epsilon(\psi \D) = e_p(\D,W) = (-1)^0 \frac{(x-T)(D)}{h(W)} = \alpha(\D),
$$
which shows that $\alpha$ factors as claimed. For the fact that $\epsilon$ induces an isomorphism 
$J[\phi] \to M/\mu_p$, see \cite[Section~6]{PS:1997}.
\end{proof}

As in \cite{PS:1997}, we denote the isomorphisms $J_\m[\phi] \to M$ and $J[\phi] \to M/\mu_p$ from Proposition \ref{epsilon} both by $\epsilon$. 

Next, we define the homomorphism 
$$
(\gamma y) \colon \Div_\perp^{(p)} \Csep \to {\ksep}^*, \qquad \sum_P n_P (P) \mapsto c^{-\frac{1}{p}\sum n_P}  \prod_P y(P)^{n_P},
$$
where $c$ is the leading coefficient of $f$ as before.  
This map descends to a map $(\gamma y) \colon \Div_\perp^{(p)} C\to k^*$. 
The name $(\gamma y)$ comes from the fact that if we choose any $p$-th root $\gamma \in \ksep$ of $c^{-1}$, then the map $(\gamma y)$  is the restriction to $\Div_\perp^{(p)} \Csep$ of evaluation of $\gamma y$ on $\Div_\perp \Csep$.  On $\Div_\perp^0 \Csep$ it is also induced by evaluation of $y$. Therefore, when appropriate, we may refer to the map $(\gamma y)$ as just $y$. We remark that
\begin{equation} \label{eqn:relx-Ty}
  N(x-T) = \prod_\omega (x-\omega)^{a_\omega}
         = c^{-1} f(x) = c^{-1} y^p = (\gamma y)^p \,.
\end{equation}

Our main result gives a cohomological interpretation of the
combined map
\[ (x-T, \gamma y) \colon \Div_\perp^{(p)} \Csep \to \Lsep^* \times {\ksep}^* \,. \]
To this end, let $\epsilon_*$ denote the maps on cohomology induced by both maps $\epsilon$. The short exact sequences 
$$
1 \to J_\m[\phi] \to J_\m(\ksep) \xrightarrow{\phi} J_\m(\ksep) \to 1
  \qquad\text{and}\qquad
1 \to J[\phi] \to J(\ksep) \xrightarrow{\phi} J(\ksep) \to 1
$$
induce connecting maps $J_\m(k) \to \H^1(J_\m[\phi])$ and $J(k) \to \H^1(J[\phi])$ that we both denote by $\delta_\phi$.

\begin{thm}\label{main}
The map 
$$
(x-T,\gamma y) \colon \Div_\perp^{(p)} \Csep \to \Lsep^* \times {\ksep}^*, \quad 
D \mapsto \big((x-T)(D), (\gamma y)(D)\big)
$$
induces natural homomorphisms $\Pic^0_\m C \to \Gamma/\chi(L^*)$ and 
$\Pic^0 C \to \Gamma/\chi(L^*)\iota(k^*)$ making the following diagram commutative.
$$
\xymatrix@!C=0.5in{
& J_\m(k) \ar'[d][dd] \ar[rr]^{\delta_\phi} && \H^1(J_\m[\phi]) \ar[rr]^(0.35){\epsilon_*}_(0.35){\isom} \ar'[d][dd] &&  \H^1(M) \ar[dd]\\
\Pic_\m^0 C\ar[dd]\ar[ru]^{\isom} \ar[rrrr]^{(x-T,\gamma y)} &&&& \Gamma/\chi(L^*) \ar[ru]_{\delta_\chi}^(0.45){\isom}\ar[dd]\\
& J(k) \ar[rr]^{\delta_\phi}  && \H^1(J[\phi]) \ar'[r][rr]^(0.35){\epsilon_*}_(0.35){\isom} &&  \H^1(M/\mu_p) \\
\Pic^0 C \ar@{^{(}->}[ru] \ar[rrrr]^{(x-T,\gamma y)} &&&& \Gamma/\chi(L^*)\iota(k^*) \ar@{^{(}->}[ru]_{\delta_\chi}
}
$$
\end{thm}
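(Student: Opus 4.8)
The plan is to first define the two induced maps and verify that they are well defined, then reduce the commutativity of the whole cube to that of its top face, which I will establish by an explicit cocycle computation resting on the identity $\alpha=\epsilon\circ\psi$ of Proposition~\ref{epsilon}.

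For well-definedness, evaluating the function identity~(\ref{eqn:relx-Ty}) at a good divisor $D\in\Div_\perp^{(p)}\Csep$ gives $N\bigl((x-T)(D)\bigr)=(\gamma y)(D)^p$, so the image lies in $\Gamma\sep$, and in $\Gamma$ when $D$ is $k$-rational by Galois-equivariance of evaluation. To descend to $\Pic_\m^0 C$ I compute, for a $k$-rational $g$ that is $\omm$ with $\div(g)$ good of degree~$0$, using $\div(x-T)=pW-\m$, $\div(y)=\Tr W-\tfrac1p(\deg f)\m$, Weil reciprocity and $g(\m)=1$, that $(x-T)(\div g)=g(W)^p$ and $(\gamma y)(\div g)=N(g(W))$, so that $(x-T,\gamma y)(\div g)=\chi(g(W))\in\chi(L^*)$. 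Dropping the requirement that $g$ be $\omm$, the value $s=g(\m)\in k^*$ survives and the same computation yields $(x-T,\gamma y)(\div g)=\chi(g(W))\,\iota(s^{-1})\in\chi(L^*)\iota(k^*)$, using $\iota(s^{-1})=(s^{-1},s^{-\frac1p\deg f})$; this kills the kernel of $\Pic_\m^0 C\to\Pic^0 C$ and so produces the map on $\Pic^0 C$. (Hilbert~90 lets me take $g$ defined over $k$ once $\div(g)$ is.) The front and left faces of the cube then commute by construction.

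The core is the top face, i.e. $\delta_\chi\circ(x-T,\gamma y)=\epsilon_*\circ\delta_\phi$ under $\Pic_\m^0 C\isom J_\m(k)$. Since $\phi\psi=p$, choosing $\D_1\in J_\m(\ksep)$ with $p\D_1=\D$ makes $\psi\D_1$ a $\phi$-preimage of $\D$, so $\delta_\phi=\psi_*\circ\delta_p$ and hence $\epsilon_*\circ\delta_\phi=(\epsilon\circ\psi)_*\circ\delta_p=\alpha_*\circ\delta_p$ by Proposition~\ref{epsilon}. Representing $\D$ by a good $k$-rational divisor $D$ and $\D_1$ by a good degree-zero divisor $E$ with $pE-D=\div(u)$ for some $u$ that is $\omm$, the class $\delta_p(\D)$ is the cocycle $\sigma\mapsto[\sigma E-E]$, whose image under $\alpha$ is $(x-T)(\sigma E-E)\big/(\sigma u/u)(W)$. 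Using the equivariances $(x-T)(\sigma E)=\sigma\bigl((x-T)(E)\bigr)$ and $(\sigma u)(W)=\sigma\bigl(u(W)\bigr)$, this equals $\sigma(\mu)/\mu$ for $\mu=(x-T)(E)/u(W)\in\Lsep^*$, so $\alpha_*\delta_p(\D)=[\sigma\mapsto\sigma\mu/\mu]$.

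It remains to check $\chi(\mu)=(x-T,\gamma y)(\D)$, which exhibits $\mu$ as an admissible $p$-th root in the formula for $\delta_\chi$ and thereby closes the top face. The first coordinate is $\mu^p=(x-T)(D)$, because $(x-T)(E)^p=(x-T)(pE)=(x-T)(D)\,(x-T)(\div u)=(x-T)(D)\,u(W)^p$ by Weil reciprocity and $u(\m)=1$. The second is $N(\mu)=(\gamma y)(D)$: evaluating~(\ref{eqn:relx-Ty}) at $E$ gives $N\bigl((x-T)(E)\bigr)=(\gamma y)(E)^p$, while $N(u(W))=u(\Tr W)=y(\div u)=y(E)^p/y(D)$ by Weil reciprocity and $\div(y)=\Tr W-\tfrac1p(\deg f)\m$; since $\deg E=\deg D=0$ the powers of the leading coefficient $c$ drop out and the two sides agree. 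The right face is then the compatibility of $\delta_\chi$ with reduction modulo $\mu_p$ from Proposition~\ref{diagramone}, the back face is naturality of the connecting maps together with the compatibility of $\epsilon$ on $J_\m[\phi]$ and $J[\phi]$, and, the representability hypothesis making $\Pic_\m^0 C\to\Pic^0 C$ surjective, the bottom face follows from the other five. I expect the main obstacle to be this last cocycle computation—constructing the correct $p$-th root $\mu$ and, in particular, verifying its norm coordinate, where the weighted norm, the divisor of $y$, and Weil reciprocity must be combined while tracking the degree-zero cancellation of the powers of $c$.
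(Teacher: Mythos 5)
Your proposal is correct and follows essentially the same route as the paper's proof: the same well-definedness computation via Weil reciprocity yielding $\chi(h(W))\iota(h(\m)^{-1})$, the same reduction of the cube to the top face, and the same cocycle argument with $\mu=(x-T)(E)/u(W)$ (the paper's $\theta$) whose $\chi$-image is checked coordinatewise exactly as you do. The only differences are cosmetic, e.g.\ naming the factorization $\epsilon_*\circ\delta_\phi=\alpha_*\circ\delta_{[p]}$ explicitly.
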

\begin{proof}
For any good divisor $D=\sum_P n_P(P)$ of degree divisible by $p$ we have,
using~\eqref{eqn:relx-Ty},
\[ N\bigl((x-T)(D)\bigr) = \bigl(N(x-T)\bigr)(D) = (c^{-1} y^p)(D) = (\gamma y)(D)^p, \]
so $(x-T,\gamma y)$ induces a homomorphism $\Div_\perp^{(p)} C \to \Gamma$. Suppose $D\in \Div_\perp^{0} C$ is principal, say 
$D = \div(h)$ for some $h \in \ff(C)^*$. Then by Weil reciprocity we have 
$$
(x-T)(D) = (x-T)\bigl(\div(h)\bigr)
         = h\bigl(\div(x-T)\bigr) = h(pW - \m) = h(W)^p \cdot h(\m)^{-1}
$$
and 
\[ (\gamma y)(D) = y\bigl(\div(h)\bigr) = h\bigl(\div(y)\bigr)
      = h\left(\Tr W - \tfrac{1}{p}(\deg f) \m\right)
      = N(h(W)) \cdot h(\m)^{-\frac{1}{p}\deg f}.
\]
We therefore find 
$$
(x-T,\gamma y)(D) = \chi(h(W))\cdot \iota(h(\m)^{-1}).
$$
This is contained in $\chi(L^*)\iota(k^*)$ and if $h$ is $\omm$ then in fact in $\chi(L^*)$. As every class in $\Pic^0 C$ and $\Pic_\m^0 C$ is represented by a good divisor, we obtain the claimed homomorphisms and see that the front face of the diagram commutes.

The commutativity of the right-side face follows from Proposition \ref{diagramone}, while that of the back and left-side faces is obvious. For the top face, take any $\D \in \Pic_\m^0 C$, represented by a good divisor $D\in \Div_\perp^0 C$, and choose a class $\D' \in \Pic_\m^0 \Csep \isom J_\m(\ksep)$ with $p\D' = \D$ and a good divisor $D'\in \Div^0_\perp \Csep$ representing $\D'$. Then $\phi(\psi \D') = p\D' = \D$, so $\delta_\phi(\D)$ is represented by the cocycle that sends $\sigma \in G_k$ to $\sigma(\psi \D') - \psi\D' = \psi(\sigma(\D')-\D')$ and $\epsilon_*(\delta_\phi(\D))$ is represented by $\sigma \mapsto \epsilon(\psi(\sigma(\D')-\D'))$.
Let $h$ be a function that is $\omm$, satisfying
\[ \div(h) = pD'-D\,, \]
so that $\div(\sigma(h)/h) = p(\sigma(D') - D')$.
Therefore, by Proposition \ref{epsilon}, the class $\epsilon_*(\delta_\phi(\D))$ is represented by the cocycle that sends $\sigma$ to  
$$
  \epsilon\left(\psi(\sigma(\D')-\D')\right)
    = \alpha(\sigma(\D')-\D')
    = \frac{(x-T)(\sigma(D')-D')}{(\sigma(h)/h)(W)}
    = \frac{\sigma(\theta)}{\theta},
$$
for all $\sigma \in G_k$, with 
$$
\theta = \frac{(x-T)(D')}{h(W)}.
$$
We now show that $\chi(\theta) = (\theta^p, N(\theta))$ equals $(x-T,\gamma y)(D)$.
In the first component, we have
\[ \theta^p
     = \frac{(x-T)(D')^p}{h(W)^p}
     = \frac{(x-T)(pD')}{h(pW)}
     = \frac{(x-T)(\div(h) + D)}{h(\div(x-T) + \frac{1}{p}(\deg f)\m)}
     = (x-T)(D)
\]
by Weil reciprocity and the fact that $h(\m) = 1$.
In the second component, we similarly have
\[ N(\theta)
     = \frac{N\bigl((x-T)(D')\bigr)}{N(h(W))}
     = \frac{y(D')^p}{h(\Tr W)}
     = \frac{y(pD')}{h(\div(y) + \tfrac{1}{p}(\deg f)\m)}
     = \frac{y(\div(h) + D)}{h(\div(y) + \tfrac{1}{p}(\deg f)\m)}
     = y(D)\,.
\]
This implies that $\delta_\chi \left((x-T,\gamma y)(\D)\right)$ is represented by the cocycle $\sigma \mapsto \sigma(\theta)/\theta$ as well, so the top face of the diagram commutes indeed.
Finally, commutativity of the bottom face of the diagram follows from commutativity of the other faces and the fact that the map 
$\Pic_\m^0 C \to \Pic^0 C$ is surjective.
\end{proof}

The diagrams of  Proposition \ref{diagramone} and Theorem \ref{main} combine to the following diagram.
\begin{equation}\label{together}
\xymatrix@!C=0.5in{
& J_\m(k) \ar'[d][dd]|-(0.31){\rule{0mm}{1.8mm}}  \ar[rr]^(0.35){\delta_\phi} && \H^1(J_\m[\phi]) \isom  \H^1(M) \ar'[d][dd]|-(0.31){\rule{0mm}{1.8mm}}   \ar[rr] && \H^1(\mu_p(\Lsep)) \ar[dd]\\
\Pic_\m^0 C\ar[dd]\ar[ru]^{\isom} \ar[rr]^(0.36){(x-T, y)} \ar@/_5.5mm/@{-->}[rrrr]_(0.35){(x-T)}&& \Gamma/\chi(L^*) \ar[ru]_{\delta_\chi}^(0.45){\isom}\ar[rr] \ar[dd]|-(0.12){\rule{0mm}{1.8mm}} && L^*/{L^*}^p \ar[dd] \ar[ru]_{\delta_p} ^{\isom}\\
& J(k) \ar'[r]_(0.65){\delta_\phi}[rr]  && \H^1(J[\phi]) \isom  \H^1(M/\mu_p)  \ar'[r][rr] && \H^1(\mu_p(\Lsep)/\mu_p) \\
\Pic^0 C \ar@{^{(}->}[ru] \ar[rr]^{(x-T, y)} \ar@/_5.5mm/@{-->}[rrrr]_(0.35){(x-T)}&& \Gamma/\chi(L^*)\iota(k^*) \ar@{^{(}->}[ru]_{\delta_\chi}\ar[rr] && L^*/{L^*}^pk^* \ar@{^{(}->}[ru]_{\delta_p}\\
}
\end{equation}
The two compositions of the horizontal maps in the front face of this diagram, indicated by dashed arrows, are the $(x-T)$ maps that play a major role in \cite{PS:1997}. Indeed, if we replace the front face by the diagram
$$
\xymatrix{
\Pic_\m^0 C \ar[d] \ar[rrr]^(0.45){(x-T)} &&& L^*/{L^*}^p \ar[d]\\
\Pic^0 C \ar[rrr]^(0.45){(x-T)} &&& L^*/{L^*}^pk^*
}
$$
then all information in this restricted diagram can already be found in \cite{PS:1997}.

\begin{rem}
As explained in \cite[Section 10]{PS:1997}, the group $\Pic^0 C$ is the largest subgroup of $J(k)$ whose image under the map 
$J(k) \to \H^1(\mu_p(\Lsep)/\mu_p)$ is contained in the image of $L^*/{L^*}^pk^*$.  Similarly, it is the largest subgroup whose image under $J(k) \to \H^1(J[\phi])$ is contained in the image of $\Gamma/\chi(L^*)\iota(k^*)$. 
\end{rem}

%============================================================================

\section{`Unfaking' the fake Selmer group}

In this section, we make the additional assumption that $k$ is a global  field. For each place $v$ of $k$, we let $k_v$ denote the completion at $v$, with absolute Galois group $G_v = \Gal(k_v\sep/k_v)$; we set $L_v = L \otimes_k k_v$ and 
$$
\Gamma_v = \left\{(\delta,n) \in L_v^*\times k_v^*\,\,|\,\, N(\delta) = n^p\right\}.
$$
We also assume that for each place $v$ of $k$, the curve $C$ has a
$k_v$-rational divisor class of degree~$1$.
As mentioned in \cite[Section 13]{PS:1997}, this assumption is automatically satisfied when the genus
\hbox{$g(C) = (d-2)(p-1)/2$} satisfies $g(C) \not \equiv 1 \pmod p$.  It implies that the injection $\Pic^0 C \to J(k)$ is an isomorphism (see \cite[Prop.~3.2 and~3.3]{PS:1997}). As before, we will abbreviate the product over all places of $k$ to $\prod_v$.
The bottom face of diagram (\ref{together}) then yields the front face of the following diagram, where, as before, we have identified $\H^1(J[\phi])$ with $\H^1(M/\mu_p)$. 
\begin{equation}\label{selmers}
\xymatrix@!C=0.5in{
&\prod_v J(k_v)/\phi J(k_v) \ar@{^{(}->}[rr]_(0.5){(x-T, y)_v} \ar@{=}'[d][dd]
\ar@/^5.5mm/[rrrr]^{(x-T)_v}
&& \prod_v \Gamma_v/\chi(L_v^*)\iota(k_v^*) \ar@{^{(}->}'[d][dd]^(0.3){(\delta_\chi)_v} \ar[rr] && \prod_v L_v^*/{L_v^*}^pk_v^*\ar@{^{(}->}[dd]^{(\delta_p)_v} \\
J(k)/\phi J(k) \ar@{^{(}->}[rr]_(0.34){(x-T, y)} \ar@{=}[dd]\ar[ru]^r
%\ar@/^5.5mm/[rrrr]^{(x-T)}
&& \Gamma/\chi(L^*)\iota(k^*) \ar@{^{(}->}[dd]^(0.3){\delta_\chi} \ar[ru]^r\ar[rr] && L^*/{L^*}^pk^*\ar@{^{(}->}[dd]^(0.3){\delta_p} \ar[ru]^r\\
&\prod_v J(k_v)/\phi J(k_v) \ar@{^{(}->}'[r][rr]_(0.2){(\delta_\phi)_v}  && \prod_v \H^1(G_v, J[\phi]) \ar'[r][rr] && \prod_v \H^1(G_v,\mu_p(\Lsep)/\mu_p) \\
J(k)/\phi J(k) \ar@{^{(}->}[rr]_(0.5){\delta_\phi}  \ar[ru]^r&& \H^1(J[\phi]) \ar[rr] \ar[ru]^r&& \H^1(\mu_p(\Lsep)/\mu_p) \ar[ru]^r\\
}
\end{equation}
For each map 
in this front face, there is an analogous map over each completion $k_v$ of $k$. Taking the product over all places gives the back face of the diagram, while $r$ denotes each map from a global group to the product of the analogous local groups. 

The image of $J(k)/\phi J(k)$ in each of the four global groups is contained in the inverse image under $r$ of the image of 
$\prod_v J(k_v)/\phi J(k_v)$ in the corresponding product of local groups. We give three of these inverse images a name. 
\begin{align*}
\Sel^\phi (J,k) &= r^{-1}\left( \im \left( (\delta_\phi)_v \colon \prod_v J(k_v)/\phi J(k_v) \to \prod_v \H^1(G_v, J[\phi]) \right) \right),\\
\Sel^\phi_\fake (J,k) &= r^{-1}\left( \im \left( (x-T)_v\colon \prod_v J(k_v)/\phi J(k_v) \to \prod_v L_v^*/{L_v^*}^pk_v^* \right) \right),\\
\Sel^\phi_\unfake (J,k) &= r^{-1}\left( \im \left( (x-T,y)_v \colon \prod_v J(k_v)/\phi J(k_v) \to \prod_v  \Gamma_v/\chi(L_v^*)\iota(k_v^*) \right) \right).
\end{align*}

The {\em Selmer group} $\Sel^\phi (J,k)$ is commonly known. The {\em fake Selmer group} $\Sel^\phi_\fake (J,k)$ was introduced by Poonen and Schaefer in \cite{PS:1997}. The two groups are related by an exact sequence
$$
\mu_p \to \Sel^\phi (J,k) \to \Sel^\phi_\fake (J,k) \to 0,
$$
and it is also known when the first map is injective (see \cite[Thm.~13.2]{PS:1997}).
However, it is not always obvious whether the image 
of $J(k)/\phi J(k)$ in $ \Sel^\phi (J,k)$ maps injectively to $ \Sel^\phi_\fake (J,k)$. This means that although the fake Selmer group 
is more practical to work with explicitly, in doing so information may be lost. The following theorem shows that no information is 
lost when we work instead with the {\em explicit Selmer group} $\Sel^\phi_\unfake (J,k)$, which is just as easy to work with as the
fake Selmer group. 

\begin{thm}\label{isoms}
The map $\delta_\chi$ induces an isomorphism
$ \Sel^\phi_\unfake (J,k) \to \Sel^\phi (J,k) $.
\end{thm}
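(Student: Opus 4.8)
The plan is to prove that $\delta_\chi$ restricts to an injection $\Sel^\phi_\unfake(J,k) \to \Sel^\phi(J,k)$ and that this injection is surjective. Injectivity is free: by Proposition~\ref{diagramone} the map $\delta_\chi \colon \Gamma/\chi(L^*)\iota(k^*) \to \H^1(M/\mu_p) = \H^1(J[\phi])$ is already injective on the whole group, hence on the subgroup $\Sel^\phi_\unfake(J,k)$. That $\delta_\chi$ sends $\Sel^\phi_\unfake(J,k)$ into $\Sel^\phi(J,k)$ is a diagram chase in~(\ref{selmers}): restriction commutes with $\delta_\chi$, so for $\eta$ with $r(\eta) \in \im((x-T,y)_v)$ one has $r(\delta_\chi(\eta)) = (\delta_\chi)_v(r(\eta))$, and the local commutativity $(\delta_\chi)_v \circ (x-T,y)_v = (\delta_\phi)_v$ (the local form of Theorem~\ref{main}) puts this into $\im((\delta_\phi)_v)$, i.e.\ $\delta_\chi(\eta) \in \Sel^\phi(J,k)$.

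The substance is in surjectivity. First I would take $\xi \in \Sel^\phi(J,k) \subseteq \H^1(J[\phi]) = \H^1(M/\mu_p)$ and show it lifts along $\delta_\chi$. By Proposition~\ref{diagramone} the image of $\delta_\chi$ is precisely the image of $\H^1(M)$ in $\H^1(M/\mu_p)$, which by the long exact sequence of $1 \to \mu_p \to M \to M/\mu_p \to 1$ (the back column of~(\ref{defM})) is the kernel of the connecting map $\Delta \colon \H^1(M/\mu_p) \to \H^2(\mu_p) = \Br(k)[p]$. So the goal becomes $\Delta(\xi) = 0$. For each place $v$ the Selmer condition gives $r_v(\xi) = (\delta_\phi)_v(P_v)$ for some $P_v$; applying Proposition~\ref{diagramone} over the local field $k_v$ (which again contains $\mu_p$) together with the local form of Theorem~\ref{main}, this local class lies in the image of the local $\delta_\chi$, hence in $\ker \Delta_v$. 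By naturality of connecting homomorphisms, $r_v(\Delta(\xi)) = \Delta_v(r_v(\xi)) = 0$, so the restriction of $\Delta(\xi)$ to $\Br(k_v)$ is trivial for every $v$.

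The hard part, where global arithmetic enters, is to deduce $\Delta(\xi) = 0$ from the vanishing of all its local restrictions. This is exactly the Hasse principle for the Brauer group of a global field, i.e.\ the injectivity of $\Br(k) \to \bigoplus_v \Br(k_v)$ from the fundamental exact sequence of class field theory; restricting to $p$-torsion gives the required injectivity of $\H^2(\mu_p) \to \bigoplus_v \H^2(G_v,\mu_p)$. Granting this, $\xi = \delta_\chi(\eta)$ for a unique $\eta \in \Gamma/\chi(L^*)\iota(k^*)$, and it remains to check $\eta \in \Sel^\phi_\unfake(J,k)$. Fixing $v$, we have $(\delta_\chi)_v(r_v(\eta)) = r_v(\xi) = (\delta_\phi)_v(P_v) = (\delta_\chi)_v\bigl((x-T,y)_v(P_v)\bigr)$, and the local injectivity of $(\delta_\chi)_v$ forces $r_v(\eta) = (x-T,y)_v(P_v) \in \im((x-T,y)_v)$. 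As this holds at every place, $r(\eta) \in \im((x-T,y)_v)$, so $\eta \in \Sel^\phi_\unfake(J,k)$ with $\delta_\chi(\eta) = \xi$, completing the proof.
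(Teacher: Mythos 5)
Your proposal is correct and follows essentially the same route as the paper: both arguments reduce surjectivity to showing that $\xi \in \Sel^\phi(J,k)$ dies under the connecting map $\H^1(M/\mu_p) \to \H^2(\mu_p) \isom \Br(k)[p]$, verify this locally via the Selmer condition and the local version of Proposition~\ref{diagramone}, and then invoke the injectivity of $\Br(k)[p] \to \prod_v \Br(k_v)[p]$ from class field theory. Your explicit final diagram chase (using local injectivity of $(\delta_\chi)_v$ to place $r_v(\eta)$ in the image of $(x-T,y)_v$) is exactly the ``short diagram chase'' the paper leaves to the reader.
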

\begin{proof}
The fact that $\delta_\chi$ maps $ \Sel^\phi_\unfake (J,k)$ injectively to $\Sel^\phi (J,k)$
is clear, so it remains to prove surjectivity. Note that we have an isomorphism 
$\H^2(\mu_p) \isom \Br(k)[p]$. Therefore, identifying  $\H^1(J[\phi])$ with $\H^1(M/\mu_p)$ through $\epsilon_*$ as before,
the long exact sequences associated to the vertical short exact sequences in diagram (\ref{defM}), together with the results of Proposition~\ref{diagramone}, give rise to a commutative diagram with exact columns:
\[ \xymatrix{ \Gamma/\chi(L^*)\iota(k^*) \ar[d]^{\delta_\chi} \ar[r]
                    & L^*/{L^*}^p k^* \ar[d]^{\delta_p} \\
              H^1(J[\phi]) \ar[d]^{\delta_1} \ar[r]
                    & H^1(\mu_p(L\sep)/\mu_p) \ar[d]^{\delta_2} \\
              \Br(k)[p] \ar@{=}[r] & \Br(k)[p]
            }
\]
An analogous statement holds for every completion $k_v$ of $k$. 
Now suppose we have an element $\xi \in \Sel^\phi (J,k)$.  Then by definition $r(\xi)$ is contained in the image of $(\delta_\phi)_v$ and therefore in 
the image of $(\delta_\chi)_v$ (see diagram (\ref{selmers})). It follows that $r(\xi)$ maps to $0$ in $\prod_v \Br(k_v)[p]$ under the product of the local versions of $\delta_1$. Since the map 
$\Br (k)[p] \to \prod_v \Br(k_v)[p]$ is injective, we conclude $\delta_1(\xi)=0$, so there is an element $\eta \in \Gamma/\chi(L^*)\iota(k^*)$ with 
$\delta_\chi(\eta) = \xi$. A short diagram chase shows $\eta \in \Sel^\phi_\unfake (J,k) $, so $\delta_\chi\colon \Sel^\phi_\unfake (J,k) \to \Sel^\phi (J,k) $ 
is indeed surjective. 
\end{proof}

\begin{rem}
Similarly, the map $\delta_p$ 
induces an isomorphism from $\Sel^\phi_\fake (J,k)$ to the group
$$
r^{-1}\left( \im \left( \prod_v J(k_v)/\phi J(k_v) \to \prod_v \H^1(G_v,\mu_p(\Lsep)/\mu_p) \right) \right).
$$
\end{rem}

\begin{proof}[Proof of Theorem \ref{mainthm}]
The map $(x-T,y) \colon J(k) \to \Gamma/\chi(L^*)\iota(k^*)$ factors as 
$$
J(k) \to J(k) / \phi J(k) \to \Sel^\phi_\unfake(J,k) \subset  \Gamma/\chi(L^*)\iota(k^*).
$$
Theorem \ref{mainthm} therefore follows immediately from Theorem \ref{isoms}.
\end{proof}

\end{document}